\newtheorem{theorem}{Theorem}
\newtheorem*{main theorem}{Main Theorem}
\newtheorem*{main lemma}{Main Lemma}
\newtheorem{conditional theorem}[theorem]{Conditional Theorem}
\newtheorem{corollary}[theorem]{Corollary}
\newtheorem{conditional corollary}[theorem]{Conditional Corollary}
\newtheorem{lemma}[theorem]{Lemma}
\theoremstyle{definition}
\newtheorem{example}[theorem]{Example}
\newtheorem{definition}[theorem]{Definition}
\newtheorem{definition-lemma}[theorem]{Definition-Lemma}
\newtheorem{definition-proposition}[theorem]{Definition-Proposition}
\theoremstyle{remark}
\newtheorem*{remark}{Remark}
\begin{document}

\title{Strong rational connectedness of Toric Varieties}
\author{Yifei Chen}
\address{Y. Chen: Department of Mathematics, The Johns Hopkins University. Baltimore,
MD 21218, USA}
%\date{Sep 14, 2008}
%\tableofcontents
\email{yichen@math.jhu.edu}

\author{Vyacheslav Shokurov}
\address{V. V. Shokurov: Department of Mathematics, The Johns Hopkins University. Baltimore, MD 21218,
USA} \address{ Steklov Mathematics Institute, Russian Academy of
Sciences, Cubkina Str. 8, 119991, Moscow, Russia}
\email{shokurov@math.jhu.edu}

\maketitle

\begin{abstract} In this paper, we prove that: For any given finitely many distinct
points $P_1,\ldots,P_r$ and a closed subvariety $S$ of codimension
$\geq 2$ in a complete toric variety over a uncountable
(characteristic 0) algebraically closed field, there exists a
rational curve $f:\mathbb{P}^1\rightarrow X$ passing through
$P_1,\ldots,P_r$, disjoint from $S\setminus \{P_1,\ldots,P_r\}$ (see
Main Theorem). As a corollary, we prove that the smooth loci of
complete toric varieties are strongly rationally connected.
\end{abstract}

\tableofcontents

\section{Introduction}

\footnotetext{Both authors were partially supported by NSF grant
DMS-0701465.}

The concept of rationally connected varieties is independently invented by Koll\'{a}r-Miyaoka-Mori
(\cite{kmm92b}) and Campana (\cite{ca92}). This kind of variety has
interesting arithmetic and geometric properties.

A class of proper rationally connected varieties comes from the
smooth Fano varieties (\cite{ca92}, \cite{kmm92a} or \cite{kol96}).
Shokurov (\cite{sh00}), Zhang (\cite{zh06}),  Hacon and McKernan
(\cite{hm07}) proved that FT (Fano type) varieties are rationally connected.

An interesting  question is whether the smooth locus of a rationally
connected variety is rationally connected. In general the answer of
the question is NO. However, for the FT (or log del Pezzo) surface case, Keel
and McKernan gave an affirmative answer, that is, if $(S,\Delta)$ is
a log del Pezzo surface, then its smooth locus $S^{sm}$ is
rationally connected (\cite{km99}), but this does not imply the
strong rational connectedness.

The concept of strongly rationally connected varieties (see
Definition \ref{D:SRC}) was first introduced by Hassett and
Tschinkel (\cite{ht08}). A proper and smooth separably rationally
connected variety $X$ over an algebraically closed field is strongly
rationally connected (\cite{kmm92b} 2.1, or \cite{kol96} IV.3.9). Xu
(\cite{xu08}) announced that the smooth loci of log del
Pezzo surfaces are not only rationally connected but also strongly
rationally connected, which confirms a conjecture of Hassett and
Tschinkel (\cite{ht08}, Conjecture 20). It is expected that the
smooth locus of an FT variety is strongly rationally connected (cf.
Example \ref{E:ToricFT} and Main Theorem).

Throughout the paper, we are working over an uncountably
algebraically closed field of characteristic 0. It is interesting
that whether Main Theorem holds for any algebraically closed (or perfect) field.

\begin{main theorem}   %\label{T:main}
Let $X$ be a complete toric variety. Let $P_1,\ldots,P_r$ be
finitely many distinct points in $X$ ($P_i$ possibly singular). Then
there is a geometrically free rational curve
$f:\mathbb{P}^1\rightarrow X$ over $P_i,1\leq i\leq r$ (see
Definition \ref{D:GeoFree}). Moreover, $f$ is free over $P_i$ if all
points $P_i$
 are smooth.
\end{main theorem}

Main Theorem can be rephrased as follows:

Let $X$ be a complete toric variety. For any given distinct points
$P_1,\ldots,P_r\in X$ (possibly singular) and any given codimension
$\geq 2$ subvariety $S\subseteq X$, there is a rational curve
$f:\mathbb{P}^1\rightarrow X$ passing through $P_1,\ldots,P_r$,
disjoint from $S\setminus\{P_1,\ldots,P_r\}$.

If all points $P_i$ are smooth, then we get the following corollary.

\begin{corollary} The smooth locus of a complete toric variety is
strongly rationally connected.
\end{corollary}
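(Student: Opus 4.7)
The plan is to derive the corollary as an essentially direct consequence of the Main Theorem, with the only preliminary work being to identify the singular locus of a complete toric variety as a valid choice for the closed subvariety $S$ appearing in the reformulation.

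First I would fix the notation: let $X$ be a complete toric variety, let $X^{sm}$ denote its smooth locus, and let $S = X \setminus X^{sm}$ be its singular locus. The crucial geometric input is that $S$ has codimension $\geq 2$ in $X$. This is a standard fact for normal toric varieties: every one-dimensional cone in the defining fan is automatically smooth (it is generated by a primitive lattice vector, which extends to a basis), so every codimension-one torus orbit lies in $X^{sm}$, and the singular locus is a union of closed orbits corresponding to cones of dimension $\geq 2$. I would record this as the only toric-geometric ingredient needed.

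Next I would invoke the Main Theorem (in its reformulated version). Given any finitely many distinct points $P_1, \ldots, P_r \in X^{sm}$, apply the Main Theorem with these points and with the closed subvariety $S$ just defined. The theorem produces a rational curve $f : \mathbb{P}^1 \to X$ passing through $P_1, \ldots, P_r$ and disjoint from $S \setminus \{P_1, \ldots, P_r\}$. Since each $P_i \in X^{sm}$, we have $P_i \notin S$, so $S \setminus \{P_1, \ldots, P_r\} = S$; hence $f(\mathbb{P}^1) \cap S = \emptyset$, i.e., $f(\mathbb{P}^1) \subseteq X^{sm}$. Moreover, since all $P_i$ are smooth, the Main Theorem asserts $f$ is free over each $P_i$, which is a bonus but not needed for the bare statement.

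Finally, I would conclude by citing Definition \ref{D:SRC}: what we have exhibited, for every finite subset of $X^{sm}$, is a rational curve in $X^{sm}$ passing through all of them, which is exactly strong rational connectedness of $X^{sm}$. There is no real obstacle here — the Main Theorem carries the entire burden — so the only item that requires any care is the codimension statement for the toric singular locus, which I would verify briefly to ensure that $S$ satisfies the hypothesis of the Main Theorem.
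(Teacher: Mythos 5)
Your approach is essentially the paper's own (the paper does not spell out a proof of the Corollary — the Main Theorem's last sentence is the proof), and the structure is correct: $S = \operatorname{Sing} X$ has codimension $\geq 2$ since a normal toric variety is smooth along every codimension-one orbit, the Main Theorem then gives a rational curve through the $P_i$ avoiding $S$, hence contained in $X^{sm}$, and the "moreover" clause gives freeness at the smooth points. The one thing you have slightly backwards is the remark that the freeness statement "is a bonus but not needed for the bare statement." In fact it is exactly what is needed: merely exhibiting some rational curve through the $P_i$ inside $X^{sm}$ does not verify any of conditions (1)--(4) of Definition \ref{D:SRC} — every unirational variety has rational curves through any finite point set, and that says nothing about strong rational connectedness. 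What closes the argument is precisely that the curve is \emph{free} over the smooth points $P_i$: taking $r=1$ this is condition (2) of Definition \ref{D:SRC}, and since $X^{sm}$ is open in $X$, freeness of a curve lying entirely in $X^{sm}$ is inherited by $X^{sm}$. So the ingredient you set aside is the one doing the work; everything else you wrote is fine.
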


\section{Preliminaries}

When we say that $x$ is a point of a variety $X$, we  mean that $x$
is a closed point in $X$.

 A \emph{rational curve} is a nonconstant morphism
$f:\mathbb{P}^1\rightarrow X$.

\label{D:FT}  A normal projective variety $X$ is called \emph{FT}
(\emph{Fano Type}) if there exists an effective $\mathbb{Q}$-divisor
$D$, such that $(X,D)$ is klt and $-(K_X+D)$ is ample. See
\cite{psh09} Lemma-Definition 2.6 for  other equivalent
definitions.

Let $N\cong \mathbb{Z}^n$ be a lattice of rank $n$. A \emph{toric
variety} $X(\Delta)$ is associated to a fan $\Delta$, a finite
collection of convex cones $\sigma\subset
N_{\mathbb{R}}:=N\otimes_{\mathbb{Z}}\mathbb{R}$ (see \cite{fu93} or
\cite{od88}).

\begin{example} \label{E:ToricFT} Projective toric varieties are FT.
Let $K$ be the canonical divisor of the projective toric variety
$X(\Delta)$, $T$ be the torus of $X$, and $\Sigma=X\setminus T=\sum
D_i$ be the complement of $T$ in $X$. Then  $K$ is linearly
equivalent to $-\Sigma$. Since $X$ is projective, there is an ample
invariant divisor $L$. Suppose that $L=\sum d_iD_i$. Let the
polytope $\Box_L=\{m\in M|\langle m,e_i\rangle+d_i\geq 0,\forall
e_i\in \Delta(1)\}$, where $M$ is the dual lattice of $N$, and
$\Delta(1)$ is the set consisting of 1-dimensional cones in
$\Delta$. Let $u$ be an element in the interior of $\Box_L$. Let $\chi^u$
 be the corresponding rational
function of $u\in M$ (see \cite{fu93} section 1.3), and div $\chi^u$
be the divisor of $\chi^u$. Then
$D=$div $\chi^u+L$ is effective and ample and has support
$\Sigma$. That is, $D=\sum d'_iD_i$ and all $d_i'>0$.

Let $\epsilon$ be a positive rational number, such that all
coefficients of prime divisors in $\epsilon D$ are strictly less
than $1$. Then $\Sigma-\epsilon D$ is effective. It is easy to check
that $(X,\Sigma-\epsilon D)$ is klt, and $-(K+\Sigma-\epsilon D)\sim
\epsilon D$ is ample. Hence $X$ is FT.
\end{example}

\begin{definition} An \emph{isogeny} of toric varieties is a finite surjective
toric morphism. Toric varieties $X$ and $Y$ are said to be
\emph{isogenous} if there exists an isogeny $X\rightarrow Y$. The
\emph{isogeny class} of a toric variety $X$ is a set consisting of
all toric varieties $Y$ such that $X$ and $Y$ are isogenous.
\end{definition}

\begin{theorem} \label{T:isogeny}Let $f:X\rightarrow Y$ be a finite surjective toric morphism.
Then there exists a finite surjective toric morphism $g:Y\rightarrow
X$.
\end{theorem}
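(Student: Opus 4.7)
The plan is to translate the statement into the standard dictionary of fans and lattices and then exhibit $g$ by multiplication by an index. Write $X = X(\Delta_X)$ and $Y = X(\Delta_Y)$ as the toric varieties attached to fans in lattices $N_X$ and $N_Y$, and let $\phi\colon N_X \to N_Y$ be the lattice homomorphism inducing $f$. Since $f$ is finite and surjective, $\phi$ must be injective with finite cokernel (so $\phi_{\mathbb{R}}\colon (N_X)_{\mathbb{R}}\to (N_Y)_{\mathbb{R}}$ is a linear isomorphism). Moreover, the finiteness of $f$ forces $\phi_{\mathbb{R}}$ to carry each cone of $\Delta_X$ bijectively onto a cone of $\Delta_Y$; otherwise a refinement would produce non-finite fibers along the toric boundary. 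Consequently, after identifying the two ambient real vector spaces via $\phi_{\mathbb{R}}$, the two fans coincide and $\phi(N_X)$ is a sublattice of $N_Y$ of finite index $n := [N_Y : \phi(N_X)]$.

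The construction of $g$ is then forced. Define $\psi\colon N_Y \to N_X$ by $\psi(v) := \phi^{-1}(nv)$; this is well-defined because $nv \in n N_Y \subseteq \phi(N_X)$. Its real extension $\psi_{\mathbb{R}} = n\cdot \phi_{\mathbb{R}}^{-1}$ is a linear isomorphism, and because multiplication by a positive scalar preserves each cone, $\psi_{\mathbb{R}}$ carries $\Delta_Y$ bijectively onto $\Delta_X$. Thus $\psi$ is compatible with the two fans and defines a toric morphism $g\colon Y \to X$, which is surjective because $\psi_{\mathbb{R}}$ is a linear isomorphism and finite because the cokernel of $\psi$ is finite and the cones are in bijection.

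The principal obstacle is not geometric but bookkeeping: one has to record and apply precisely the lattice-and-fan criterion for a toric morphism to be finite and surjective. Once that is in hand, the elementary remark that any finite-index sublattice of $N_Y$ absorbs multiplication by its index yields $g$ immediately, and the passage from the lattice statement back to the varieties is automatic.
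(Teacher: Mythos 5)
Your proof is correct and follows essentially the same route as the paper's: identify the source lattice as a finite-index sublattice of the target lattice with the same fan, note that a suitable integer multiple of the big lattice lands inside the small one, and use the resulting lattice inclusion (together with the fact that scaling a lattice by a positive integer gives an isomorphic toric variety) to produce the reverse finite surjective toric morphism. The paper simply chooses any $r$ with $rN \subseteq N'$, while you take $r$ to be the index itself; this is the same idea.
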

\begin{proof} Let $f:X\rightarrow Y$ be a finite surjective toric
morphism of toric varieties and $\varphi:(N',\Delta')\rightarrow
(N,\Delta)$ be the corresponding map of lattices and fans. Then we
can identify $N'$ as a sublattice of $N$ and $\Delta'=\Delta$.

There is an positive integer $r$ such that $rN$ is a sublattice of
$N'$. Let $g$ be the corresponding toric morphism of
$(rN,\Delta)\rightarrow (N',\Delta)$. Since $(rN,\Delta)$ and
$(N,\Delta)$ induce an isomorphic toric variety, we get
$g:Y\rightarrow X$ is a finite surjective toric morphism.
\end{proof}

The properties of isogeny:

1) Isogeny is an equivalence relation.

2) If a toric variety $Y$ is in the isogeny class of $X$ and
$\mu:X\rightarrow Y$ is the isogeny, then there is a one-to-one
correspondence between the set of orbits $\{O_i^X\}$ of $X$ and the
set of orbits $\{O_i^Y=\mu(O_i^X)\}$ of $Y$. Hence $\dim O_i^X=\dim
O_i^Y$ for all $i$, and the number of orbits is independent of the
choice of toric varieties in an isogeny class of $X$.

\smallskip

A variety $X$ over a characteristic 0 field is rationally connected,
if any two general points $x_1,x_2\in X$ can be connected by a
rational curve of $X$ of a bounded family.

\smallskip

\begin{definition} \label{D:SRC}(\cite{ht08} Definition 14.) A smooth rationally
connected variety $Y$ is \emph{strongly rationally connected} if any
of the following conditions hold:

(1) for each point $y\in Y$, there exists a rational curve
$f:\mathbb{P}^1\rightarrow Y$ joining $y$ and a generic point in
$Y$;

(2) for each point $y\in Y$, there exists a free rational curve
containing $y$;

(3) for any finite collection of points $y_1,\ldots,y_m\in Y$, there
exists a very free rational curve containing the $y_j$ as smooth
points;

(4) for any finite collection of jets $$\text{Spec
}k[\epsilon]/\langle\epsilon^{N+1}\rangle\subset Y,\ \ \
i=1,\ldots,m$$ supported at distinct points $y_1,\ldots,y_m$, there
exists a very free rational curve smooth at $y_1,\ldots,y_m$ and
containing the prescribed jets.
\end{definition}

\begin{definition} \label{D:WeakFree} Let $X$ be a complete normal variety, $B$ be a set of finitely many
closed points in $\mathbb{P}^1$, and $g:B\rightarrow X$ be a
morphism. A rational curve $f:\mathbb{P}^1\rightarrow X$ is called
\emph{weakly free} over $g$ if there exist an irreducible family of
rational curves $T$ and an evaluation morphism ev:
$\mathbb{P}^1\times T\rightarrow X$ such that

1) $f=f_{t_0}=$ ev$|_{\mathbb{P}^1\times t_0}$ for some $t_0\in T$,

2) for any $t\in T$, $f_t=$ ev$|_{\mathbb{P}^1\times t}$ is a
rational curve and $f_t|_B=g$,

3) the evaluation morphism ev: $\mathbb{P}^1\times T\rightarrow X$
by ev$(x,t)=f_t(x)$ is dominant.

We say that a \emph{rational curve} $f':\mathbb{P}^1\rightarrow X$
is a \emph{general deformation} of $f$, or $f'$ is a
\emph{sufficiently general weakly free rational curve}, if there is
an open dense subset $U$ of $T$, such that $f'=f_t$ and $t\in
U\subseteq T$. We say that a \emph{weakly free rational curve}
$g:\mathbb{P}^1\rightarrow X$ is a \emph{general deformation of}
$f$, if there is an irreducible family $T'$, such that $T\cap T'$
contains an open dense subset in $T$,  $g=g_{t'}$ for some $t'\in
T'$ and $g$ is weakly free in its own family.
\end{definition}

\begin{definition} \label{D:GeoFree} Let $X$ be a complete normal variety, $B$ be a set of finitely many
closed points in $\mathbb{P}^1$, and $g:B\rightarrow X$  be a
morphism.  A rational curve $f:\mathbb{P}^1\rightarrow X$ is called
\emph{geometrically free} over $g$ if there exist an irreducible
family of rational curves $T$ and an evaluation morphism ev:
$\mathbb{P}^1\times T\rightarrow X$ such that

1) $f=f_{t_0}=$ ev$|_{\mathbb{P}^1\times{t_0}}$  for some $t_0\in
T$,

2) for any $t\in T$, $f_t=$ ev$|_{\mathbb{P}^1\times t}$ is a
rational curve and $f_t|_B=g$,

3) for any codimension 2 subvariety $Z$ in $X$,
$f_t(\mathbb{P}^1)\cap Z\subseteq g(B)$ for general $t\in T$
(general meaning $t$ belongs to a dense open subset in $T$,
depending on $Z$).

\smallskip

If $X$ is smooth over an uncountable field of characteristic 0, then
weak freeness over $g$ is equivalent to usual freeness over $g$ if
$|B|\leq 2$.
\end{definition}

\begin{remark}
In our application, we usually assume $g$ is one-to-one. Let
$P_i=g(Q_i)$ where $B=\{Q_i\}$.  Without confusion, we say $f$ is
geometrically free over $\{P_i\}$ (resp. weakly free over $\{P_i\}$)
instead of saying that $f$ is geometrically free over $g$ (resp.
weakly free over $g$).
\end{remark}

Weak freeness and geometric freeness are  generalizations of usual
freeness (see \cite{kol96} II.3.1 Definition) if the curve passes
through singularities. To consider weakly free rational curves or
geometrically free rational curves, we think of them as general
members in a certain family. In particular, we can suppose that the
morphism ev is flat.

\begin{example} Let $X$ be a projective cone over a conic. Let $T$ be the family of
all lines through the vertex $O$. Then $l\in T$ is not free. However
$l$ is weakly free and geometrically free over $O$ by construction.
\end{example}

\smallskip

We need a resolution as follows.

\begin{theorem}\label{L:specialresolution} Let $X$ be a toric
variety. Let $\Sigma$ be the invariant locus of $X$. Let
$P_1,\ldots,P_r\in X$ be $r$ points. Let $f:\mathbb{P}^1\rightarrow
X$ be a sufficiently general weakly free rational curve over
$P_1,\ldots,P_r$. Then there exists a resolution
$\pi:\tilde{X}\rightarrow X$, such that

1) $\pi^{-1}(\Sigma\cup\{P_i\})$ is a divisor with simple normal
crossing;

2) $\pi^{-1}(P_j)\subseteq \pi^{-1}(\Sigma\cup\{P_i\})$ is a divisor
for each point $P_j$;

3) $\pi:\tilde{X}\rightarrow X$ is an isomorphism over
$X\setminus(\emph{Sing }X\cup \{P_i\})$;

4) sufficiently general  $\tilde{f}(\mathbb{P}^1)$ intersects
$\pi^{-1}(\Sigma\cup\{P_i\})$ over each $P_j$ only in  divisorial
points of $\pi^{-1}(\Sigma\cup\{P_i\})$, where
$\tilde{f}:\mathbb{P}^1\rightarrow\tilde{X}$ is the proper birational transformation
of a general deformation of $f$ and is a (weakly)
free rational curve.

More generally, let $f_j:\mathbb{P}^1\rightarrow X$, $1\leq j\leq m$
be finitely many sufficiently general weakly free rational curve
over a subset of $\{P_i\}$, where $\{P_i\}$ is a set of finitely many
distinct points in $X$. Then there exists a resolution
$\pi:\tilde{X}\rightarrow X$ such that

1') $\pi^{-1}(\Sigma\cup \{P_i\})$ is a divisor with simple normal
crossing;

2') $\pi^{-1}(P_i)\subseteq \pi^{-1}(\Sigma\cup\{P_i\})$ is a
divisor for each point $P_i$;

3') $\pi:\tilde{X}\rightarrow X$ is an isomorphism over
$X\setminus(\emph{Sing }X\cup \{P_i\})$;

4') For each $j$, sufficiently general  $\tilde{f}_j(\mathbb{P}^1)$ intersects
$\pi^{-1}(\Sigma\cup\{P_i\})$ over each $P_i$ only in  divisorial
points of $\pi^{-1}(\Sigma\cup\{P_i\})$, where
$\tilde{f}_j:\mathbb{P}^1\rightarrow\tilde{X}$ is the proper birational transformation
 of a general deformation of $f_j$ and
is a (weakly) free rational curve.
\end{theorem}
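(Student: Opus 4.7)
The plan is to build $\pi:\tilde X\to X$ in two stages---first a toric resolution of singularities, then further blowups at the $P_i$ and along non-SNC corners---and then to verify (4') by showing that the lifted family of curves is free on the smooth target, so that general members avoid the codimension-$\geq 2$ strata of the SNC divisor. I write the argument for the single-curve version (1)--(4); the multi-curve case (1')--(4') is handled by the same recipe applied to all $f_j$ simultaneously.

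First I would take a toric resolution of singularities $\pi_1:X_1\to X$ obtained by a suitable refinement of the fan of $X$. Then $X_1$ is smooth and toric, $\pi_1$ is an isomorphism over $X\setminus\mathrm{Sing}(X)$, and $\pi_1^{-1}(\Sigma)$ is the toric boundary of $X_1$, automatically SNC. Each singular $P_i$ corresponds to a zero-dimensional orbit, i.e.\ a maximal cone of the fan, so the refinement already produces a divisorial exceptional fibre over such $P_i$. For each smooth $P_i$, I blow up the point to produce an exceptional divisor; standard log-resolution blowups along the non-SNC corners then restore SNC while remaining an isomorphism over $X\setminus(\mathrm{Sing}(X)\cup\{P_i\})$. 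This yields $\pi:\tilde X\to X$ satisfying (1)--(3).

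For (4), lift $f$ and each $f_t$ uniquely to morphisms $\tilde f,\tilde f_t:\mathbb{P}^1\to\tilde X$ (possible since $\mathbb{P}^1$ is smooth and $\pi$ is proper and birational), yielding an evaluation $\widetilde{\mathrm{ev}}:\mathbb{P}^1\times T\to\tilde X$. Dominance of $\mathrm{ev}$ together with the fact that $\pi$ is an isomorphism over a dense open implies $\widetilde{\mathrm{ev}}$ is dominant onto $\tilde X$. Although each $f_t$ was forced through the fixed points $P_i$, the lift $\tilde f_t$ is only forced to meet the divisors $\pi^{-1}(P_i)$, at points that generically vary with $t$, so the new family has no fixed point-base locus on $\tilde X$. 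By the remark in Definition~\ref{D:GeoFree}, $\tilde f$ is then free on the smooth variety $\tilde X$. A standard result (\cite{kol96}, II.3) says that a free rational curve on a smooth variety admits general deformations avoiding any prescribed closed subset of codimension $\geq 2$; applying this to the codimension-$\geq 2$ strata (the ``corners'') of $\pi^{-1}(\Sigma\cup\{P_i\})$ shows that for general $t$, $\tilde f_t(\mathbb{P}^1)$ meets this SNC divisor only at smooth (divisorial) points, giving (4).

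The expected main obstacle is the claim that the lifted family has no point-base locus on $\tilde X$. If every $f_t$ happens to share a common tangent direction at some $P_i$, then all lifts $\tilde f_t$ meet the exceptional divisor over $P_i$ at a common point, producing a new fixed base point that breaks the freeness argument. The remedy is to further blow up each such forced base point---which necessarily lies in a corner of $\pi^{-1}(\Sigma\cup\{P_i\})$---and iterate the construction of $\pi$. Because the $P_i$ are finite and weak-freeness of $f$ is preserved under strict-transform lifting, this iteration terminates, after which the freeness argument applies on the final $\tilde X$.
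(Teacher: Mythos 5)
Your overall plan (toric resolution, then blowups at the $P_i$ and along corners, then a freeness argument on the smooth model) is in the same spirit as the paper's, but there are two substantive gaps.

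First, you assert that ``each singular $P_i$ corresponds to a zero-dimensional orbit.'' This is false in general: a singular point of a toric variety need not be torus-fixed, nor even torus-invariant --- the theorem allows $P_1,\dots,P_r$ to be arbitrary points of $X$. The paper deals with this explicitly by noting that non-invariant $P_i$ must first be converted into toroidal invariant points via a \emph{toroidalization}; without that step, your stage-one construction does not produce a divisorial fibre over such $P_i$, and you cannot simply blow up a positive-dimensional fibre once and expect SNC. (The paper's own detailed proof devotes its Step~3 to repeatedly blowing up the components of $(\pi_1\pi_2)^{-1}(P_i)$ and then restoring SNC.)

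Second, and more seriously, your remedy for the ``common tangent direction'' obstacle is an iterated blowup whose termination you justify only by ``the $P_i$ are finite and weak-freeness of $f$ is preserved under strict-transform lifting.'' Neither fact bounds the number of iterations: one can a priori keep introducing new forced corner points over a single $P_i$ indefinitely. The paper's proof supplies the missing argument: after $k$ blowups the total transform of the pulled-back boundary $D$ contains exceptional divisors $E_{ij}$ with multiplicities $e_{ij}\geq i+1$, and the projection formula $\widetilde{C}\cdot \pi^*D = C\cdot D = m$ forces $k\leq m$, where $m$ is the (fixed, finite) intersection number of a general member of the family with $\Sigma$. This multiplicity/intersection-number estimate is the actual content of the termination claim, and it is absent from your proposal. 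You should also be a bit more careful invoking the remark after Definition~\ref{D:GeoFree}: it asserts weak freeness $\Leftrightarrow$ freeness only when the number of base points is at most $2$, so you need to explain why the lifted family on $\tilde X$ has no fixed base points at all (so that the hypothesis $|B|\leq 2$ is met with $B=\emptyset$), which is precisely what your iterative blowup is trying to arrange and therefore cannot be presumed at the outset.
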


\begin{proof} When the
ground field is of characteristic 0, 1)-3) follow from usual facts
in the resolution theory, e.g. see \cite{km98} Theorem 0.2. However,
in the toric or toroidal case, the same result holds for any field.
More precisely, if all $P_i$ are invariant, we can use a toric
resolution. If some $P_i$ are not invariant, they can be converted
into toroidal  invariant points $P_i$ after a toroidalization.

\smallskip

We say that $\tilde{f}(\mathbb{P}^1)$ intersects
$\pi^{-1}(\Sigma\cup\{P_i\})$ over each $P_i$ in a  divisorial point
$x$ if  $x$ belongs to only one prime divisor of
$\pi^{-1}(\Sigma\cup \{P_i\})$ for some $i$ and the prime divisor is over $P_i$. To
fulfill 4), we need extra resolution over intersections of the
divisorial components of $\pi^{-1}(\Sigma\cup\{P_i\})$ through which
general $\tilde{f}$ is passing over $P_i$. Termination of such
resolution follows from an estimation by the multiplicities of
intersection for $f(\mathbb{P}^1)$ with $\Sigma$. The last resolution is independent
of the choice of a general rational curve by Lemma \ref{L:sufficentlygeneral} below. However
it depends on the choice of intersections of divisorial components. For more details,
see the proof of Lemma 4.3.4 in \cite{ch09}.

For the general statement, we can get 1')-3') in a similar manner above. To
fulfill 4'), we just need extra resolutions over each point $P_i$.\end{proof}

We discuss some examples of rational curves on projective spaces and
quotient projective spaces.

\begin{example} \label{E:RCProj} For any given subvariety $S$ of codimension $\geq 2$
in $\mathbb{P}^n$, any points $P_1,\ldots,P_r\in \mathbb{P}^n$, and
any integer $d\geq r$, there exists a rational curve $C$ of degree
$d$, such that each $P_i\in C$ and $C\cap S=\emptyset$.

Indeed, we can construct a tree $T$ with $r$ branches, such that
each $P_i$ is a smooth points on a unique branch and disjoint from
$S$. The tree can be smoothed into a rational curve $C$ passing
through $P_1,\ldots,P_r$, disjoint from $S$. The rational curve $C$
has degree $r$. For $d\geq r$, we can attach $d-r$ rational curves
to the tree $T$, and smooth it.
\end{example}

Applying Example \ref{E:RCProj}, we get

\begin{example} Let $\pi:\mathbb{P}^n\rightarrow X$ be a finite
morphism,  $S$ be a codimension $\geq 2$ subvariety in $X$, and
$\{P_i\}_{i=1}^m$ be a set of $m$ points outside $S$. Then there
exists a rational curve $C$, such that each $P_i\in C$ and $C\cap
S=\emptyset$.
\end{example}

In particular, the same result holds if  $X$ is a quotient space
$\mathbb{P}^n/G$, where $G$ is a finite group, for example, if $X$
is a weighted projective space. It is well known that if $X$ is a
complete $\mathbb{Q}$-factorial toric variety with Picard number
one, then there exist a weighted projective space $Y$ and a finite
toric morphism $\pi:Y\rightarrow X$. So the same result holds for
rational curves on a complete $\mathbb{Q}$-factorial toric variety
with Picard number one. It is a very special case of our Main
Theorem.

\section{Proof of Main Theorem}

In this section we prove Main Theorem. Let us first prove Main
Lemma, which is a special weak case of Main Theorem.

\begin{main lemma}
Let $X$ be a complete toric variety. Let $P,Q\in X$ be two distinct
points ($P,Q$ possibly singular). Let $S\subseteq X$ be a closed
subvariety of codimension $\geq 2$. Then there exists a weakly free
rational curve on $X$ over $P,Q$, disjoint from $S\setminus\{P,Q\}$.
\end{main lemma}

To prove Main Lemma, we need some preliminaries.

\begin{lemma}\label{L:sufficentlygeneral} Let $f$ be a weakly free rational curve on
$X$, and $F_1,\ldots,F_s\subseteq X$ be $s$ proper irreducible
subvarieties in $X$. Then there exist $s',0\leq s'\leq s$, subvarieties among $\{F_j\}$
(after renumbering we assume they are $F_1,\ldots,F_{s'}$) such that
 a general deformation of $f$ intersects $F_1,\ldots,F_{s'}$, and is
disjoint from $F_{s'+1},\ldots, F_s$.
\end{lemma}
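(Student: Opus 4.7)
The plan is to partition the subvarieties $F_j$ according to the locus of parameters whose curves hit them, and then use irreducibility of $T$ to obtain a single dense open set that works.

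First, by the definition of weak freeness, $f$ belongs to an irreducible family $T$ with an evaluation morphism $\text{ev}: \mathbb{P}^1 \times T \to X$. For each $j \in \{1,\ldots,s\}$, I would consider the closed subscheme $\text{ev}^{-1}(F_j) \subseteq \mathbb{P}^1 \times T$ and its image
\[
T_j := \mathrm{pr}_T\bigl(\text{ev}^{-1}(F_j)\bigr) = \{\,t \in T : f_t(\mathbb{P}^1) \cap F_j \neq \emptyset\,\}
\]
under the second projection. Because $\mathbb{P}^1$ is proper, $\mathrm{pr}_T$ is a closed map, so each $T_j$ is a closed subset of $T$.

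Next, since $T$ is irreducible, for each $j$ there are only two possibilities: either $T_j = T$ (the intersection is unavoidable in the family) or $T_j \subsetneq T$ is a proper closed subset. After reordering, assume $T_1 = T_2 = \cdots = T_{s'} = T$ and $T_{s'+1}, \ldots, T_s$ are all proper closed subsets. Define
\[
U := T \setminus \bigl(T_{s'+1} \cup \cdots \cup T_s\bigr).
\]
As the complement of finitely many proper closed subsets in an irreducible variety, $U$ is open and dense in $T$.

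Finally, for any $t \in U$, the curve $f_t$ avoids each $F_j$ with $j > s'$ by construction, while $t \in T_j = T$ for each $j \leq s'$ forces $f_t(\mathbb{P}^1) \cap F_j \neq \emptyset$. This is precisely the assertion that a general deformation of $f$ meets $F_1,\ldots,F_{s'}$ and is disjoint from $F_{s'+1},\ldots,F_s$. There is no serious obstacle here; the one point requiring slight care is ensuring that the $T_j$ really are closed, which is why one invokes properness of the projection $\mathbb{P}^1 \times T \to T$.
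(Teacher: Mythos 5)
Your proof follows the same basic strategy as the paper's: project $\text{ev}^{-1}(F_j)$ to $T$ and use irreducibility of $T$ to produce a dense open locus. The one substantive difference is in how you control the images $T_j$. You deduce that each $T_j$ is \emph{closed} from properness of $\mathbb{P}^1 \times T \to T$, and then use the dichotomy $T_j = T$ or $T_j \subsetneq T$ closed. This is valid when the $F_j$ are closed, and is in fact a cleaner argument than the paper's in that case. However, the paper only claims the weaker statement that each $T_j$ is \emph{constructible} (by Chevalley's theorem), and accordingly uses the constructible dichotomy: either $T_j$ has dense interior, or its closure is a proper closed subset. The reason the paper opts for the weaker hypothesis is that in the proof of the Main Lemma (Step 3), the lemma is invoked with $F_j = O_j \setminus \{P,Q\}$, where the $O_j$ are torus orbits — these are only locally closed in $X$, not closed, so $\text{ev}^{-1}(F_j)$ is not closed and your properness argument does not directly apply. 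To cover that application you would need to either replace ``closed'' by ``constructible'' throughout (with the corresponding interior/closure dichotomy), or argue separately that one can pass to closures of the $F_j$; as written, your proof proves the lemma only under the stronger reading of ``subvariety'' as ``closed subvariety.''
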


The proof of this Lemma is a standard exercise in incidence relations. See \cite{ch09} Lemma 4.3.2 for a detailed proof.

\begin{lemma} \label{L:movefromsmvar} Let $X$ be a complete toric variety. Let
$P,Q\in X$ be two points (possibly singular), and $S$ be a closed
subvariety of codimension $\geq 2$. Let $F_1,\ldots,F_{s}$ be all the
irreducible components of \emph{Sing} $X$.  Let
$f:\mathbb{P}^1\rightarrow X$ be a sufficiently general  weakly free
rational curve over $P,Q$. Suppose $f(\mathbb{P}^1)$ intersects
$F_1\setminus\{P,Q\},\ldots,F_{s'}\setminus\{P,Q\}$, and is disjoint
from $F_{s'+1}\setminus\{P,Q\},\ldots,F_s\setminus\{P,Q\}$. Then
there exists a weakly free rational curve $f'$ over $\{P,Q\}$, which is
a general deformation of $f$, such
that $f'(\mathbb{P}^1)$ is disjoint from
$((S\setminus\text{\emph{Sing} }X) \cup F_{s'+1}\cup\ldots\cup
F_{s})\setminus\{P,Q\}$. Moreover, for any fixed closed  subvariety
$Z$ of $X$, if $f(\mathbb{P}^1)\cap (Z\setminus\{P,Q\})=\emptyset$,
then $f'(\mathbb{P}^1)\cap (Z\setminus\{P,Q\})=\emptyset$.
\end{lemma}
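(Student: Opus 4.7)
The plan is to realize $f$ as a general member $f = f_{t_0}$ of the irreducible family $T$ witnessing the weak freeness, with dominant evaluation $\mathrm{ev}\colon \mathbb{P}^1 \times T \to X$ and marked points $Q_P, Q_Q \in \mathbb{P}^1$ at which every $f_t$ takes the values $P, Q$. After shrinking $T$ to a dense open I would assume each $f_t$ is birational with $f_t^{-1}(P) = \{Q_P\}$ and $f_t^{-1}(Q) = \{Q_Q\}$, so the restriction
\[
\mathrm{ev}^\circ \colon (\mathbb{P}^1 \setminus \{Q_P,Q_Q\}) \times T \longrightarrow X^\circ := X \setminus \{P,Q\}
\]
stays dominant. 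The sought $f'$ will be produced as $f_t$ for $t$ in a dense open of $T$ cut out by one condition per irreducible piece of the bad locus.

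For the main disjointness, the irreducible pieces to avoid are the components $S_i$ of $S$ not contained in $\mathrm{Sing}\,X$, together with $F_{s'+1},\ldots,F_s$; each has codimension $\geq 2$ in $X$ (the $F_j$ because $X$ is normal, so $\mathrm{Sing}\,X$ has codimension $\geq 2$; the $S_i$ because $\mathrm{codim}\,S \geq 2$ by hypothesis). For any such codimension-$\geq 2$ irreducible $V$ with $V \not\subseteq \{P,Q\}$, I would apply the standard incidence/dimension count to the dominant morphism $\mathrm{ev}^\circ$: by generic flatness
\[
\dim(\mathrm{ev}^\circ)^{-1}(V \cap X^\circ) \leq \dim T + 1 - \mathrm{codim}(V) \leq \dim T - 1,
\]
so the projection $W_V \subseteq T$ of this incidence is contained in a proper closed subvariety $A_V \subsetneq T$, and for $t \notin A_V$ the curve $f_t(\mathbb{P}^1)$ is disjoint from $V \setminus \{P,Q\}$.

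For the moreover, the dimension count is unavailable if $Z$ has divisorial components, so I would instead exploit the sufficient generality of $f$. Define
\[
W_Z := \{\, t \in T : f_t(\mathbb{P}^1) \cap (Z \setminus \{P,Q\}) \neq \emptyset \,\},
\]
which is the image under $\pi_T$ of the constructible set $\mathrm{ev}^{-1}(Z) \cap ((\mathbb{P}^1 \setminus \{Q_P,Q_Q\}) \times T)$ and is therefore itself constructible in the irreducible variety $T$. By the dichotomy for constructible subsets of an irreducible variety, $W_Z$ either lies in a proper closed subvariety $A_Z \subsetneq T$ or contains a dense open subset; the latter would, by sufficient generality, force $t_0 \in W_Z$, contradicting the hypothesis $f(\mathbb{P}^1) \cap (Z \setminus \{P,Q\}) = \emptyset$, so the former holds. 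Taking $t$ in the nonempty open $(T \setminus A_Z) \cap \bigcap_V (T \setminus A_V)$, intersected with the open where $f_t$ is birational with the correct preimages of $P$ and $Q$, produces $f' = f_t$ realizing all the required disjointnesses. The main obstacle I foresee is the bookkeeping of the fixed sections $\{Q_P\} \times T$ and $\{Q_Q\} \times T$: whenever $P$ or $Q$ lies in $V$, these contribute irreducible components of dimension $\dim T$ inside $\mathrm{ev}^{-1}(V)$, which would destroy the dimension estimate above. Restricting to $\mathrm{ev}^\circ$ cleanly excises them, and this is harmless precisely because only $V \setminus \{P,Q\}$ must be avoided.
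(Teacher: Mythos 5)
Your argument has a genuine gap in the dimension count, and the gap is exactly what forces the paper to detour through a resolution. You claim that since $\mathrm{ev}^\circ$ is dominant, ``by generic flatness''
\[
\dim(\mathrm{ev}^\circ)^{-1}(V\cap X^\circ)\leq \dim T+1-\operatorname{codim}(V)
\]
for every irreducible $V$ of codimension $\geq 2$. But generic flatness only gives flatness of $\mathrm{ev}^\circ$ over a dense open of $X^\circ$, and nothing prevents $V$ from lying entirely in the complement, where fiber dimensions can jump. Indeed your own hypotheses already falsify the estimate as stated: the components $F_1,\ldots,F_{s'}$ of $\operatorname{Sing} X$ also have codimension $\geq 2$ (by normality), so if the bound held uniformly it would force a general $f_t$ to be disjoint from every $F_j\setminus\{P,Q\}$ — yet we are told $f$, which is sufficiently general, meets $F_1\setminus\{P,Q\},\ldots,F_{s'}\setminus\{P,Q\}$. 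Weak freeness alone gives dominance of the evaluation map, not flatness or equidimensionality of its fibers; a family of curves on a singular $X$ can be forced to pass through a fixed base point $x\in X^\circ$, in which case $\mathrm{ev}^{-1}(x)$ already has dimension $\geq \dim T$ and the estimate is off by $\dim X - 1$.

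The constructibility dichotomy you use for the ``moreover'' clause is sound (and is essentially Lemma~\ref{L:sufficentlygeneral}), and it does settle $F_{s'+1},\ldots,F_s$ because you are handed the disjointness there as a hypothesis. It cannot, however, settle $S\setminus\operatorname{Sing} X$: for that set you have no a priori information about which side of the dichotomy $t_0$ lands on, and that is precisely the content the lemma is supposed to supply. The paper's proof supplies it by applying Theorem~\ref{L:specialresolution}: it lifts $f$ to $\tilde f$ on a resolution $\tilde X$ where a general deformation is genuinely \emph{free} in Koll\'ar's sense (II.3.11), so that the evaluation morphism on $\tilde X$ is smooth and Koll\'ar II.3.7 applies to move $\tilde f$ off the codimension-$\geq 2$ set $\pi^{-1}((S\setminus\operatorname{Sing} X)\setminus\{P,Q\})$. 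Property 4) of the resolution (divisorial intersection over $P,Q$) then guarantees the pushed-down deformation still passes through $P$ and $Q$ and stays away from the closures of $\pi^{-1}(F_j\setminus\{P,Q\})$, $j>s'$, and Lemma~\ref{L:wfreegowfree} transports the conclusion to $X$. Your approach, by staying on the singular $X$, never has access to the freeness that makes the dimension count legitimate.
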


\begin{proof} Applying Theorem \ref{L:specialresolution}
to the toric variety $X$
and two points $\{P,Q\}$, we get a resolution $\pi:\tilde{X}\rightarrow
X$ satisfying 1)-3) in the theorem
and a weakly free rational curve $\tilde{f}:\mathbb{P}^1\rightarrow \tilde{X}$
satisfying 4) in the theorem. A general deformation $\tilde{f}'$ of $\tilde{f}$ is weakly free, so
$\tilde{f}'$ is free by \cite{kol96} II.3.11 (Here we need the assumption that the ground
field is uncountable and of characteristic 0.) Moreover,
we can assume that $\tilde{f}'$ is disjoint from $(S\setminus\text{Sing
}X)\setminus \pi^{-1}\{P,Q\}$ by \cite{kol96} II.3.7.

On the other hand, let $\Sigma$ be the invariant locus of $X$. Notice that
Sing $X\subseteq \Sigma$. Then by Theorem \ref{L:specialresolution},
$\tilde{f}(\mathbb{P}^1)$ intersects $\pi^{-1}(\Sigma \cup\{P,Q\})$ divisorially
over $P,Q$, and $\tilde{f}(\mathbb{P}^1)$ is
disjoint from the closure of
$\pi^{-1}(F_{s'+1}\setminus\{P,Q\}),\ldots,\pi^{-1}(F_s\setminus\{P,Q\})$.
So the general deformation $\tilde{f}'$ of $\tilde{f}$ intersects open subsets
of divisors $\pi^{-1}(P)$ and $\pi^{-1}(Q)$, disjoint from the closure
of $((S\setminus \text{Sing }X)\setminus\pi^{-1}\{P,Q\})\cup
\pi^{-1}(F_{s'+1}\setminus\{P,Q\})\cup\cdots\cup
\pi^{-1}(F_s\setminus\{P,Q\})$.
We apply Lemma \ref{L:wfreegowfree} by replacing $f'$ by $\tilde{f}'$,
dominant morphism $\mu$ by  $\pi:\tilde{X}\rightarrow X$,
 $\{P_i\}$ by $\{P,Q\}$, and $S$ by $(S\setminus$ Sing $X)\cup F_{s'+1}\cup\cdots\cup F_s$.
Then we get the weakly free rational
curve $f'=\pi
\tilde{f'}:\mathbb{P}^1\rightarrow X$ is a general deformation of $f$
(see Definition \ref{D:WeakFree}), passing through points $P,Q$ and
disjoint from $((S\setminus\text{Sing
}X)\cup F_{s'+1}\cup\cdots\cup F_s)\setminus\{P,Q\}$.

Moreover,
we can assume that $f'$ is a weakly free rational curve over $P,Q$,
 by a base change of the family to which $f'$
belongs (For details, see the proof of  Lemma 4.3.1 in \cite{ch09}).

The last statement can be proved similarly.
\end{proof}

\begin{lemma} \label{L:wfreegowfree} Let $X,X'$ be two complete varieties
with $\dim X>0$. Let $\mu:X'\rightarrow X$ be a dominant morphism.
 Then the image of a weakly free rational curve on $X'$
is weakly free on $X$ in the following sense:

Let $P_1,P_2,\ldots,P_r\in \mu(X)$ be $r$ distinct points, and
$S\subseteq X$ be a closed subvariety. Let $S'=\mu^{-1}S$, and
$P_1',P_2',\ldots,P_r'\in X'$ be points such that $\mu(P_i')=P_i$
for $i=1,\ldots,r$. If $f':\mathbb{P}^1\rightarrow X'$ is a weakly
free rational curve over $P_1',P_2',\ldots,P_r'$, disjoint from
$S'\setminus\{P_1',P_2',\ldots,P_r'\}$, then $f=\mu\circ f''$ is a
weakly free rational curve on $X$ over $P_1,P_2,\ldots,P_r$,
disjoint from $S\setminus\{P_1,P_2,\ldots,P_r\}$, where $f''$
is a general deformation of $f'$.
\end{lemma}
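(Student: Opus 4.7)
The plan is to push the parameter space of deformations of $f'$ through $\mu$ and verify the three clauses of Definition \ref{D:WeakFree} for the resulting family on $X$. Let $\mathrm{ev}' : \mathbb{P}^1 \times T' \to X'$ be the evaluation morphism of an irreducible family realizing $f'$ as weakly free over the $P_i'$, and set $\mathrm{ev} := \mu \circ \mathrm{ev}' : \mathbb{P}^1 \times T' \to X$. Since $\mathrm{ev}'$ is dominant onto $X'$ and $\mu$ is dominant onto $X$, the composite $\mathrm{ev}$ is dominant onto $X$, which will give clause (3). For each $t \in T'$ the morphism $f_t := \mu \circ f_t' : \mathbb{P}^1 \to X$ sends each $Q_i \in B$ to $\mu(P_i') = P_i$, so the interpolation condition in clause (2) is automatic once we have arranged $f_t'|_B = g'$ (the lift of $g$ through the $P_i'$).

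The only real subtlety is ensuring that $f_t$ is a bona fide rational curve, i.e.\ nonconstant, for general $t$. Let $U \subseteq T'$ be the locus on which $\mu \circ f_t'$ is nonconstant. Constancy is a closed condition on $T'$ (it is cut out by the vanishing of the differential of $\mathrm{ev}$ along the $\mathbb{P}^1$ direction, or equivalently by $\mathrm{ev}$ factoring through the projection to $T'$ on a fiber), so $U$ is open; and if $U$ were empty then $\mathrm{ev}$ would factor through the projection $\mathbb{P}^1 \times T' \to T'$, contradicting the dominance of $\mathrm{ev}$ onto the positive-dimensional variety $X$. Hence $U$ is a nonempty open dense subset of the irreducible space $T'$. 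I take $T := U$ and $f'' := f'_{t_0}$ for a sufficiently general $t_0 \in U$; then $f = f_{t_0} = \mu \circ f''$, and the pair $(T, \mathrm{ev}|_{\mathbb{P}^1 \times T})$ fulfils all three clauses of Definition \ref{D:WeakFree} for $f$.

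Finally, the disjointness $f(\mathbb{P}^1) \cap (S \setminus \{P_i\}) = \emptyset$ follows directly from the identity $S' = \mu^{-1}(S)$: if $f(x) = \mu(f''(x)) \in S$ then $f''(x) \in S'$, and by the hypothesis on $f'$ (shrinking $U$ further so that the closed disjointness condition propagates from $f'$ to general deformations, which is allowed by Lemma \ref{L:sufficentlygeneral}) we get $f''(x) \in \{P_i'\}$, hence $f(x) \in \{P_i\}$. The main obstacle in the argument is really just the nonemptyness of $U$, which is exactly where the hypotheses $\dim X > 0$ and the dominance of $\mu$ are used; everything else is routine bookkeeping on the definition of weak freeness.
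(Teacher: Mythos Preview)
Your argument is correct and follows essentially the same approach as the paper's own proof: push the evaluation map $\mathrm{ev}':\mathbb{P}^1\times T'\to X'$ through the dominant morphism $\mu$ to obtain a dominant evaluation onto $X$, and conclude that $\mu\circ f''$ is weakly free for general $f''\in T'$. The paper's proof is quite terse and records only this dominance observation, whereas you have additionally spelled out the nonconstancy of $\mu\circ f''_t$ for general $t$ (which is where $\dim X>0$ enters) and the disjointness from $S\setminus\{P_i\}$; both of these are implicit in the paper's statement but not written out in its proof.
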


\begin{proof} Since $f'$ is weakly free,  ev: $\mathbb{P}^1\times T'\rightarrow X'$
is dominant, where $T'$ is the family associated to $f'$. Since
$\mu:X'\rightarrow X$ is dominant, ev: $\mathbb{P}^1\times
T'\rightarrow X'\rightarrow X$ is dominant. Hence for general deformation
 $f''\in
T'$ of $f'$, $f=\mu\circ f''$ is a weakly free rational curve on $X$.
\end{proof}

\begin{lemma} \label{L:smoothisogeny} Let $X$ be a $\mathbb{Q}$-factorial toric
variety, and $O$ be a singular orbit of $X$. Then there exists an isogeny $\mu:Y\rightarrow
X$, such that $\mu^{-1}(O)$ is smooth.
\end{lemma}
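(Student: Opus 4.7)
The plan is to build the required isogeny by shrinking the lattice $N$ to a sublattice $N' \subset N$ of finite index tailored to the cone that corresponds to $O$. Since $X$ is $\mathbb{Q}$-factorial, its fan $\Delta \subseteq N_{\mathbb{R}}$ is simplicial. Write $O = O_\sigma$ for the orbit attached to a cone $\sigma \in \Delta$, and let $v_1,\ldots,v_k \in N$ denote the primitive generators of its rays; these are $\mathbb{Q}$-linearly independent because $\sigma$ is simplicial.

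First I extend $v_1,\ldots,v_k$ to a $\mathbb{Q}$-basis $v_1,\ldots,v_n$ of $N_{\mathbb{Q}}$ by adjoining arbitrary $v_{k+1},\ldots,v_n \in N$ in general position, and set $N' := \mathbb{Z}v_1 + \cdots + \mathbb{Z}v_n \subseteq N$. Then $N'$ has full rank $n$, so $[N:N'] < \infty$, and every cone of $\Delta$ is automatically rational with respect to $N'$ (clearing denominators via the index). The inclusion $N' \hookrightarrow N$, together with the common fan $\Delta$, induces a toric morphism $\mu: Y := X(N',\Delta) \to X(N,\Delta) = X$. Because the lattice inclusion is of finite index and the fans agree, $\mu$ is finite and surjective, hence an isogeny in the sense of the paper.

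Next I verify the smoothness claim. By the orbit correspondence recalled in property 2) of the isogeny discussion, $\mu^{-1}(O)$ is the single orbit of $Y$ attached to the same cone $\sigma$; smoothness of $Y$ along this orbit is equivalent to $\sigma$ being a smooth cone in $N'$, namely to its primitive $N'$-generators forming part of a $\mathbb{Z}$-basis of $N'$. By construction $v_1,\ldots,v_n$ is already a $\mathbb{Z}$-basis of $N'$, so each $v_i$ with $1 \le i \le k$ is primitive in $N'$ and $v_1,\ldots,v_k$ extends to a basis of $N'$; the affine chart $U_\sigma$ of $Y$ is then isomorphic to $\mathbb{A}^k \times \mathbb{G}_m^{n-k}$, and in particular $\mu^{-1}(O) \subseteq U_\sigma$ is contained in the smooth locus of $Y$.

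The main point to watch, and arguably the only real content of the argument, is the distinction between primitive generators of a ray in $N$ and in $N'$: passing to a sublattice rescales these generators, and the whole construction succeeds precisely because we chose $v_1,\ldots,v_k$ themselves to become $N'$-basis vectors. No global smoothness is needed or claimed for $Y$: other cones of $\Delta$ may well remain singular in $N'$, but since the lemma only demands smoothness of $Y$ along $\mu^{-1}(O)$, no further modification is required.
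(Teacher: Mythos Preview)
Your proof is correct and follows essentially the same approach as the paper: pass to the toric variety associated to a finite-index sublattice $N'\subset N$ in which the relevant cone becomes smooth, and use the induced isogeny $X(N',\Delta)\to X(N,\Delta)$. The only difference is cosmetic: the paper takes $N'$ to be generated by the primitive ray generators of a (maximal) simplicial cone whose affine chart contains $O$, whereas you take the cone corresponding to $O$ itself and extend its ray generators to a $\mathbb{Z}$-basis by hand; your version is slightly more explicit about obtaining a full-rank sublattice and does not rely on the existence of $n$-dimensional cones.
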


\begin{proof} Let $(N,\Delta)$ be the lattice and fan associated to $X$. Let $N'$ be
the sublattice generated by the primitive elements of the simplicial cone $\sigma$ such that
$O$ is contained in the affine open subset $\sigma$ corresponding to. Let $Y$ be the toric variety
corresponding to $(N',\Delta)$ and $\mu$ be the natural finite dominant morphism corresponding to $(N',\Delta)\rightarrow
(N,\Delta)$. By construction of $\mu$, $\mu^{-1}(O)$ is smooth.
\end{proof}

\begin{proof}[Proof of Main Lemma] \underline{Step 1.}
After $\mathbb{Q}$-factorization $q:X'\rightarrow X$, we can assume
that $X$ is a complete $\mathbb{Q}$-factorial toric variety
(\cite{fj03} Corollary 3.6). Indeed, a weakly free rational curve on
$X'$ gives a weakly free rational curve on $X$ by Lemma
\ref{L:wfreegowfree}.

\smallskip

\underline{Step 2.} A weakly free rational curve can be moved from any
smooth variety of codimension $\geq 2$ in the sense of Lemma
\ref{L:movefromsmvar}. So we can reduce the proof of Main Lemma to
the case $S=I(X)$, where $I(X)$ denotes the union of orbits of $X$
of codimension $\geq 2$. Since $X$ is a toric variety, Sing
$X\subseteq I(X)$.

Indeed, for any subvariety $S\subseteq X$ of codimension $\geq 2$,
suppose there is a sufficiently general weakly free rational curve
$f:\mathbb{P}^1\rightarrow X$ over $P,Q\in X$, disjoint from
$I(X)\setminus\{P,Q\}$.  Apply Lemma \ref{L:movefromsmvar} to the
subvariety $S$, and the weakly free rational curve $f$. Since Sing
$X\subseteq I(X)$, $s'=0$ in Lemma \ref{L:movefromsmvar}, that is,
$f(\mathbb{P}^1)$ is disjoint from
$F_1\setminus\{P,Q\},\ldots,F_s\setminus\{P,Q\}$. Then there exists
a weakly free rational curve $f'$, which is a general deformation of
$f$, such that $f'(\mathbb{P}^1)$ is disjoint from
$((S\setminus\text{Sing }X)\cup F_1\cup\ldots\cup
F_s)\setminus\{P,Q\}=((S\setminus\text{Sing }X)\cup\text{ Sing
}X)\setminus\{P,Q\}=S\setminus\{P,Q\}$.

\smallskip

\underline{Step 3.} Suppose that $I(X)$ consists of $\tilde{s}$ distinct orbits
$O_1,\ldots,O_{\tilde{s}}$. Let $f:\mathbb{P}^1\rightarrow X$ be a sufficiently general
weakly free rational curve over $P,Q$. By Lemma
\ref{L:sufficentlygeneral}, we can assume that $f(\mathbb{P}^1)$
intersects with $O_1\setminus\{P,Q\},\ldots,O_{s'}\setminus\{P,Q\}$,
and is disjoint from
$O_{s'+1}\setminus\{P,Q\},\ldots,O_{\tilde{s}}\setminus\{P,Q\}$ for
some $s'$.

Notice that $s'$ depends on the points $P,Q$ and the variety $X$.
However, since $s'$ is bounded by $\tilde{s}$, and $\tilde{s}$ is
independent of choice of $X$ in an isogeny class, there exists an $\bar{s}$ such that
for any toric variety $Y$ in the isogeny class of $X$, and two
distinct points $P',Q'\in Y$, there exists a weakly free rational curve
$f'_{\bar{s}}:\mathbb{P}^1\rightarrow Y$ over $P',Q'$,
 such that $f'_{\bar{s}}(\mathbb{P}^1)$ intersects with at
most
$O_1^Y\setminus\{P',Q'\},\ldots,O_{\bar{s}}^Y\setminus\{P',Q'\}$,
and is disjoint from
$O_{\bar{s}+1}^Y\setminus\{P,Q\},\ldots,O_{\tilde{s}}^Y\setminus\{P,Q\}$,
where $O_i^Y$ are orbits of $Y$ of codimension $\geq 2$.
Furthermore, we can assume that $\dim O_1^Y\geq \dim O_2^Y\geq
\cdots\geq \dim O_{s'}^Y\geq \dim O_{s'+1}^Y\geq \cdots\geq\dim
O_{\tilde{s}}^Y$. This order is good for us, because  $\cup_{j\geq
 s}O_{j}^Y$ is closed for any $s$.

\smallskip

We fix a complete toric variety $X$, two points $P,Q$ and a weakly
free rational curve $f_{\bar{s}}$ over $P,Q$. By Lemma
\ref{L:wfreegowfree} and  \ref{L:smoothisogeny}, we can suppose that
the orbit $O_{\bar{s}}$ is smooth. Indeed, by Lemma
\ref{L:smoothisogeny}, there is an isogeny $\mu:Y\rightarrow X$ such
that $O_{\bar{s}}^Y=\mu^{-1}(O_{\bar{s}})$ is smooth. Let $P',Q'\in
Y$ such that $\mu(P')=P,\mu(Q')=Q$. Then existence of a weakly free
rational curve $f':\mathbb{P}^1\rightarrow Y$ over $P',Q'$, disjoint
from $O_{\bar{s}}^Y\cup \cdots \cup O_{\tilde{s}}^Y$, implies
existence of a weakly free rational curve $f:\mathbb{P}^1\rightarrow
X$ over $P,Q$, disjoint from $O_{\bar{s}}\cup\cdots\cup
O_{\tilde{s}}$,  by Lemma \ref{L:wfreegowfree} with
$X'=Y,\{P_i\}=\{P,Q\}$ and $S=O_{\bar{s}}^Y\cup
O_{\bar{s}+1}^Y\cup\cdots\cup O_{\tilde{s}}^Y$.

\smallskip

\underline{Step 4.} Now, we prove that there is a weakly free
rational curve $f_{\bar{s}-1}$ over $P,Q$ such that
$f_{\bar{s}-1}(\mathbb{P}^1)$ intersects at most
$O_1\setminus\{P,Q\},\ldots,O_{\bar{s}-1}\setminus\{P,Q\}$, and is
disjoint from
$O_{\bar{s}}\setminus\{P,Q\},\ldots,O_{\tilde{s}}\setminus\{P,Q\}$.
Indeed, we have the following two cases:

1) If $f_{\bar{s}}(\mathbb{P}^1)$ is disjoint from
$O_{\bar{s}}\setminus\{P,Q\}$, then let $f_{\bar{s}-1}=f_{\bar{s}}$.

2) If $f_{\bar{s}}(\mathbb{P}^1)$ intersects
$O_{\bar{s}}\setminus\{P,Q\}$, we apply Lemma \ref{L:movefromsmvar}
with $Z=O_{\bar{s}+1}\cup\cdots\cup O_{\tilde{s}}$ and
$S=O_{\bar{s}}\cup Z$. Notice that $S$ and $Z$ are  closed
subvarieties of $X$ of codimension $\geq 2$, and $O_{\bar{s}}$ is
smooth. In particular, $S\setminus$ Sing $X\supseteq O_{\bar{s}}$.
By assumption, $f_{\bar{s}}(\mathbb{P}^1)\cap
(Z\setminus\{P,Q\})=\emptyset$. Therefore, by the Lemma, there
exists a weakly free rational curve $f_{\bar{s}-1}$ on $X$, which is
a general deformation of $f_{\bar{s}}$, such that
$f_{\bar{s}-1}(\mathbb{P}^1)$ intersects at most
$O_1\setminus\{P,Q\},\ldots,O_{\bar{s}-1}\setminus\{P,Q\}$, and is
disjoint from $(O_{\bar{s}}\cup
Z)\setminus\{P,Q\}=(O_{\bar{s}}\setminus\{P,Q\})\cup(O_{\bar{s}+1}\setminus\{P,Q\})\cup\cdots\cup(O_{\tilde{s}}\setminus\{P,Q\})$.

\smallskip

\underline{Step 5.} By induction on $\bar{s}$, there is a weakly
free rational curve $f_0$ over $P,Q$, disjoint from
$I(X)\setminus\{P,Q\}$.
\end{proof}

\begin{proof}[Proof of Main Theorem]

\underline{Step 1.} First, let us consider $S=$ Sing $X$.

There is a free rational curve $f_0:C_0\cong\mathbb{P}^1\rightarrow
X$ disjoint from $\{P_i\}\cup S$. Indeed, we can apply Main Lemma to
the subvariety $\{P_i\}\cup S$ and any two smooth points
$P,Q\not\in\{P_i\}\cup S$ in $X$.  Since $f_0(\mathbb{P}^1)$ is in
the smooth locus of $X$,  $f_0$ is free and disjoint from
$\{P_i\}\cup S$.

\begin{center}
\begin{overpic}[scale=0.8]{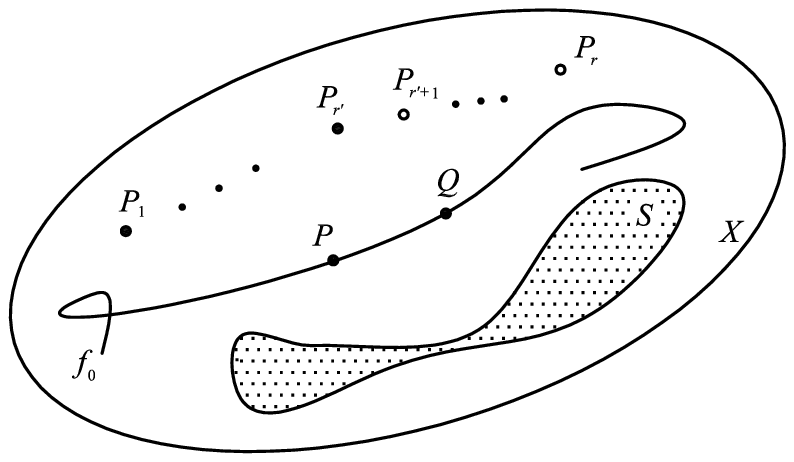}
\end{overpic}
\end{center}

We construct a comb of smooth rational
curves $C$ and a morphism $f:C\rightarrow X'$ as follows.

\textbf{I.} Assume that $P_1,\ldots,P_{r'}$ are smooth points for
some $r'$, $1\leq r'\leq r$, and $P_{r'+1},\ldots,P_r$ are singular
points of $X$. Choosing points $t_1,\ldots,t_r\in C_0$, such that
$P_i'=f_0(t_i)\in X$ are distinct. For each $j$, applying the Main
Lemma  to $S=$
Sing $X\cup\{P_i\}$ and points $P=P_j,Q=P_j'$, there is a weakly free rational
curve $f_j:C_j\cong\mathbb{P}^1 \rightarrow X$ over $P_j,P_j'$ for
each $1\leq j\leq r$, disjoint from $S\setminus\{P_j,P_j'\}$.

\begin{center}
\begin{overpic}[scale=0.8]{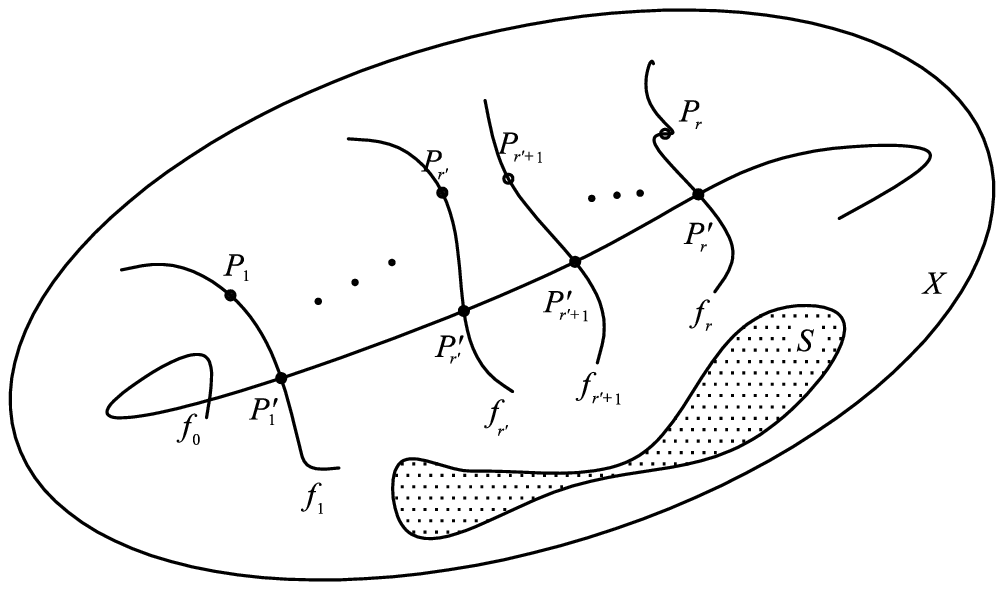}
\end{overpic}
\end{center}

Applying the general statement of Theorem
\ref{L:specialresolution} to weakly free rational curves $f_0,f_1,\ldots,f_r$
and the set $\{P_i\}=\{P_i\}_{i\geq r'+1}$, we get a resolution $\pi:X'\rightarrow X$.

For each $1\leq i\leq r'$, since $P_i$ and $P_i'$ are smooth points,
$f_i(\mathbb{P}^1)$ is contained in the smooth locus of $X$.
Therefore $f_i$ is free for each $1\leq i\leq r'$ by \cite{kol96} II.3.11.
 We identify the curve $f_i:C_i\cong \mathbb{P}^1\rightarrow X$
birationally with a free rational curve
$f_i:C_i\cong\mathbb{P}^1\rightarrow X'$. We also identify $P_i\in
X$ with $P_i\in X'$ for $1\leq i\leq r'$, and $P_i'\in X$ with $P_i'\in
X'$ for $1\leq i\leq r$. More precisely, $f_i(0_i)=P_i$, where $0_i\in C_i,1\leq i\leq r'$,
and $f_i(\infty_i)=P_i'$ where $\infty_i\in C_i,1\leq i\leq r$.

For each $r'+1\leq j\leq r$, $P_j$ is singular. Let
$f_j':C_j\cong\mathbb{P}^1\rightarrow X'$ be the proper birational transformation
 of a sufficiently general deformation of $f_j$. Since
$\pi:X'\rightarrow X$ is a resolution in Theorem
\ref{L:specialresolution}, $f_j'(C_j)$ intersects $\pi^{-1}P_j$
divisorially over $P_j$ for $r'+1\leq j\leq r$, and is disjoint from
the closure of $\pi^{-1}(S\setminus\{P_i\})$. Let $Q_j$ be a point
in $f_j'(C_j)\cap \pi^{-1}P_j$ over $P_j$ for $r'+1\leq j\leq r$. We
can suppose that $f_i$ is very free for $1\leq i\leq r'$ and $f_j'$
is very free for $r'+1\leq j\leq r$ by \cite{kmm92a} 1.1. or
\cite{kol96} II.3.11.

By construction of $f_i,1\leq i\leq r'$ and $f_j',r'+1\leq j\leq r$,
$f_i(C_i)$ and $f_j'(C_j)$ are disjoint from the closure of
$\pi^{-1}(S\setminus\{P_1,\ldots,P_r\})=\pi^{-1}(S\setminus\{P_{r'+1},\ldots,P_r\})$.

\begin{center}
\begin{overpic}[scale=0.8]{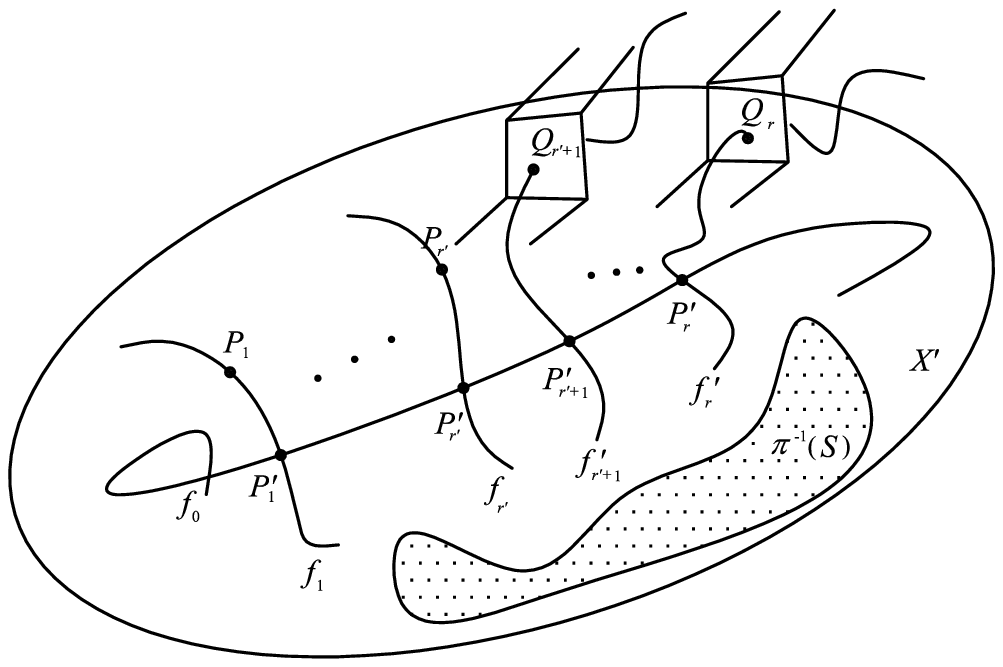}
\end{overpic}
\end{center}

\textbf{II.} Gluing $\cup_{i=0}^r C_i$, we get a comb of smooth
rational curves $C=\sum_{i=0}^r C_i$ and a morphism $f:C\rightarrow
X'$. Indeed, we identify
points $\infty_i\in C_i$ with $t_i\in C_0$ for each $1\leq i\leq r$. Then we have a comb of
smooth rational curves $C=\sum_{i=0}^rC_i$ and a morphism
$f:C\rightarrow X'$ because $f_0(t_i)=f_i(\infty_i)=P_i'$. Notice that $f(C)$
is disjoint from the closure of $\pi^{-1}(S\setminus\{P_1,\ldots,P_r\})$.

In the end, $f:C\rightarrow X'$ can be smoothed into a rational curve
$f':\mathbb{P}^1\rightarrow X'$ such that $f'$ is free over
$P_i,1\leq i\leq r'$ and $Q_j,r'+1\leq j\leq r$, and is disjoint
from the closure of $\pi^{-1}(S\setminus\{P_1,\ldots,P_r\})$ (We can generalize the
proof of  \cite{kol96} II.7.6 for comb to get $f'$ is a free
rational curve over $\{P_1,\ldots,P_{r'},Q_{r'+1},\ldots,Q_{r}\}$, not only with
$\{P_1,\ldots,P_{r'},Q_{r'+1},\ldots,Q_{r}\}$ fixed,
as stated in \cite{kol96} II.7.6. Or we can attach additional
rational curves to enlarge of the family of $f'$, such that $f'$ is a
free rational curve over $\{P_1,\ldots,P_{r'},Q_{r'+1},\ldots,Q_{r}\}$ after a base change).

\begin{center}
\begin{overpic}[scale=0.8]{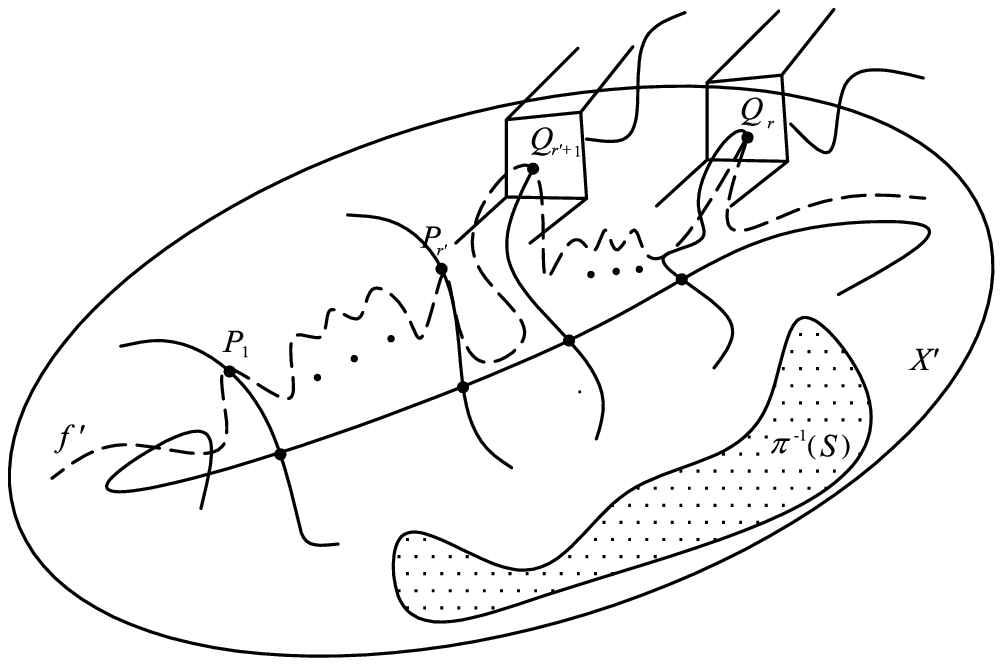}
\end{overpic}
\end{center}

\underline{Step 2.} Now we consider any closed subvariety $S$ of
codimension $\geq 2$.

By Step 1, there is a free rational curve
$f':\mathbb{P}^1\rightarrow X'$ over $P_1,\ldots,P_{r'}$, $Q_{r'+1},\ldots,Q_r$,
disjoint from the closure of
$\pi^{-1}(\text{Sing }X\setminus\{P_1,\ldots,P_r\})$, where $\pi:X'\rightarrow X$ is the resolution
in Step 1. On the other hand, $\pi^{-1}((S\setminus\text{Sing }X)
\setminus\{P_1,\ldots,P_r\})$ is a codimension $\geq 2$ subvariety on $X'$ by Theorem \ref{L:specialresolution} 3').
So a general deformation $f''$ of $f'$ is free over $P_1,\ldots,P_{r'},Q_{r'+1},\ldots,Q_r$,
disjoint from $\pi^{-1}((S\setminus\text{Sing }X)
\setminus\{P_1,\ldots,P_r\})$ by \cite{kol96} II.3.7. Since $f'$ is disjoint from the closure of
$\pi^{-1}(\text{Sing }X\setminus\{P_1,\ldots,P_r\})$, $f''$ is disjoint from $\pi^{-1}(\text{Sing }X\setminus\{P_1,\ldots,P_r\})$.
Hence $f''$ is disjoint from
$\pi^{-1}(\text{Sing }X\setminus\{P_1,\ldots,P_r\})\cup \pi^{-1}((S\setminus\text{Sing }X)
\setminus\{P_1,\ldots,P_r\})=\pi^{-1}(S\setminus\{P_1,\ldots,P_r\})$.
Therefore, $\pi f''$ is a general deformation of
$\pi f'$ over $P_1,\ldots,P_r$, disjoint from $S\setminus\{P_1,\ldots,P_r\}$,
and thus $\pi f'$ is a
geometrically free rational curve over $P_1,\ldots,P_r$ on $X$.
\end{proof}

\end{document}